\theoremstyle{plain}\newtheorem{Theorem}{Theorem}[section]
\theoremstyle{plain}\newtheorem{Corollary}[Theorem]{Corollary}
\theoremstyle{plain}\newtheorem{Lemma}[Theorem]{Lemma}
\theoremstyle{plain}
\theoremstyle{plain}
\theoremstyle{plain}
\theoremstyle{plain}
\newtheorem{lemma}[Theorem]{Lemma}
\theoremstyle{plain}\newtheorem*{Claim*}{Claim}
\theoremstyle{plain}\newtheorem*{Theorem*}{Theorem}
\theoremstyle{plain}
\theoremstyle{remark}\newtheorem{remark}[Theorem]{Remark}
\theoremstyle{remark}
\theoremstyle{remark}\newtheorem*{Notation*}{Notation}
\theoremstyle{plain}
\newtheorem*{rep@theorem}{\rep@title}
\newcommand{\newreptheorem}[2]{
\newenvironment{rep#1}[1]{
 \def\rep@title{#2 \ref{##1}}
 \begin{rep@theorem}}
 {\end{rep@theorem}}}
\numberwithin{equation}{section}
\DeclareMathOperator{\id}{id}
\DeclareMathOperator{\Dax}{Dax}
\author{Huizheng Guo}
\address{Department of Mathematics, George Washington University}
\email{hguo30@gwu.edu}
\title{On the Mapping class group of nontrivial $S^2$ fiber bundles}
\begin{document}

\maketitle

\begin{abstract}
    Let $\Sigma$ be an orientbale closed surface and let $\Sigma'$ be a nonorientable closed surface. In the paper, we show that for any nontrivial orientable $S^2$ fiber bundles $X= \Sigma \ltimes S^2$ and $X' = \Sigma' \ltimes S^2$, there are surjective homomorphisms from both $MCG(X)$ and $MCG(X')$ to $\mathbb{Z}^{\infty}$. The proof is an application of generalization of Dax invariants for embedded surfaces in 4-manifolds. The property of $MCG(X)$ and $MCG(X')$ inherits from trivial fiber bundle $\Sigma \times S^2$. We show that the algebraic size of the smooth mapping class group of $S^2$
-fiber bundles does not depend on the triviality of the bundle.
\end{abstract}
\vspace{-0.15cm}
\section{Introduction}

Given a closed orientable surface $\Sigma$ with finite genus and given a closed nonorientable surface $\Sigma'$ with finite genus. Let $M$ be a $S^2$ fiber bundle over a closed surface, $MCG(M)$ is defined to be the smooth mapping class group i.e. the group of isotopy classes of orientation preserving diffeomorphism. Alternatively, we say that $MCG(M)$ are path components of $Diff^+(M)$, i.e. $MCG(M) = \pi_0(Diff^+(M))$. Let $MCG_0 \subset MCG(M)$ consists of diffeomorphisms that are pseudo isotopic to identity. Consider the forgetting map $h: MCG(M) \rightarrow Aut(M)$ where $Aut(M)$ denote the homotopy equivalence between $M$ and itself up to homotopy, denote $ker(h) = MCG_0(M)$, where  $$MCG_0(M) = \{[f]\in MCG(M) | f  \ \textit{is pseudo-isotopic to} \ id\}$$.

The computation and properties of mapping class group of 4-manifold in smooth category is always a popular exploration in 4-manifold. There are indeed few more results in topological category: closed simple connected 4-manifold were shown by combination work of Perron\cite{P86}, Quinn\cite{Q86} and Freedman\cite{F82} in the approach of intersection form. The nontrivial boundary 4-manifold case was recently extended by Orson and Powell\cite{OP}. In the same paper\cite{OP}, they discuss the stable smooth mapping class group of simple connected 4-manifolds under smooth category. However, Smooth mapping class groups remain largely inaccessible due to the lack of practical techniques. In 2025, Lin, Wu, Xie and Zhang\cite{LWXZ} extended Dax invariants from embedded disks to embedded surfaces in $S^2$-bundle and applied the generalization of Dax invariants to show the following surjectivity properties of $MCG(M)$ for $M$ being a trivial $S^2$-bundle over surfaces. Here $\mathbb{Z}^{\infty}$ denote the direct sum of infinitely many copies of $\mathbb{Z}$.

\begin{Theorem}
    There exists a surjective homomorphism $\phi: MCG(M) \rightarrow \mathbb{Z}^{\infty}$ such that its restriction on $MCG_0(M)$ is also of infinite rank.
\end{Theorem}

The generalized Dax invariants of embedded surface provides a practical way of analyzing mapping class group of fiber bundles in 4-manifold. The paper is the first step application of the technique on mapping class group of nontrivial $S^2$ fiber bundles after their work. We show that the mapping class group of nontrivial $S^2$ fiber bundles inherits the surjectivity properties of $MCG(M)$. This suggests that the surjectivity phenomenon is not an artifact of product decomposition, but a property intrinsic to 2-sphere fibrations in the smooth category. What is more, it also shows that the generalization of Dax invariants stays a powerful and innovative tool to study families of fiber bundles in 4-manifold.

Consider a nontrivial fiberation $$S^2 \rightarrow X \rightarrow \Sigma$$ and let $X$ be a nontrivial orientable fiber bundle over a closed surface $\Sigma$. Similarly, let $X'$ be a nontrivial orientable fiber bundle over  a closed surface $\Sigma'$. 

In this paper, we prove the following theorem as our main theorem:

\begin{Theorem}\label{1.1}
    There exists a surjective homomorphism $\Psi': MCG(X) \rightarrow \mathbb{Z}^{\infty}$, whose image of $MCG_0(X)$ is also $\mathbb{Z}^{\infty}$. The same property holds for $X' = \Sigma' \ltimes S^2$
\end{Theorem}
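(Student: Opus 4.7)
The plan is to exploit the fact that the nontrivial $S^2$-bundle $X = \Sigma \ltimes S^2$ agrees with the trivial bundle $\Sigma \times S^2$ outside a small ball: writing $X = U \cup_\phi V$ with $U \cong (\Sigma \setminus D^2) \times S^2$ and $V \cong D^2 \times S^2$ glued along $S^1 \times S^2$ via the clutching $\phi$ representing the nontrivial element of $\pi_1(SO(3))$, the codimension-zero piece $U$ is literally common to $X$ and $\Sigma \times S^2$. I would transfer the known $\mathbb{Z}^\infty$-surjection for the trivial bundle through this shared subset, rather than attempt a fresh construction adapted to the twist.

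\textbf{Extension step.} Any compactly supported $f \in \Diff_c(\inte(U))$ extends by the identity on $V$ to a self-diffeomorphism of $X$, yielding a homomorphism $e : \pi_0 \Diff_c(U) \to MCG(X)$. I would verify that $e$ sends the identity-homotopic-rel-boundary subgroup into $MCG_0(X)$: a homotopy on $U$ rel $\partial U$ glues with the stationary homotopy on $V$ to produce a homotopy in $X$ between $e(f)$ and $\id_X$. Thus $e$ restricts to $e_0 : MCG_0^c(U) \to MCG_0(X)$.

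\textbf{Dax step.} Assuming the generalized Dax invariant for embedded surfaces developed earlier in the paper, one obtains a homomorphism $\Dax_F : MCG_0(X) \to \Lambda$ associated to a reference embedded surface $F \subset X$, with target a free abelian group $\Lambda$ of countably infinite rank built out of conjugacy classes in $\pi_1(\Sigma)$ (modulo the action of $\pi_2$). I would take $F$ to be a smooth section of the $S^2$-bundle chosen to lie mostly inside $U$. For each nontrivial conjugacy class $g \in \pi_1(\Sigma) = \pi_1(X)$, I would produce $f_g \in \Diff_c(U)$ by the standard sphere-drag/barbell construction around a loop representing $g$, arranged so that $\Dax_F(e_0(f_g))$ is the basis element of $\Lambda$ indexed by $g$. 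Because the supports and the intersection computations all occur inside $U$, the Dax value agrees with the one calculated in the trivial bundle, giving the required surjection onto a $\mathbb{Z}^\infty$-subgroup of $\Lambda$.

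\textbf{Main obstacle.} The hard part is the Dax step: legitimizing the generalized Dax invariant on $MCG_0(X)$ for a nontrivial bundle. One must analyze the normal-bundle data of $F \subset X$ (the Euler class of the normal bundle differs from the trivial case because of $\phi$, so the invariant must be set up to absorb this background twist), confirm that the target group $\Lambda$ is still free abelian of infinite rank (requiring care with the $\pi_2(X)$-action, which can a priori change when $X$ is twisted), and check that the local computation $\Dax_F(e_0(f_g))$ inside $U$ is unaffected by the clutching on the complementary $D^2 \times S^2$. Once these technical points are settled, additivity and linear independence of the $\Dax_F(e_0(f_g))$ follow from the trivial-bundle case, and the same argument transports verbatim to $X' = \Sigma' \ltimes S^2$ with $\pi_1(\Sigma')$ replacing $\pi_1(\Sigma)$ (the nonorientability of the base is harmless since $X'$ itself is orientable and $U' \cong (\Sigma' \setminus D^2) \times S^2$ still sits inside $X'$ in the same way).
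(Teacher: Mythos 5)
Your strategy is genuinely different from the paper's, and it founders exactly on the step you yourself flag as the ``main obstacle.'' You propose to define the generalized Dax homomorphism directly on $MCG_0(X)$ for the twisted bundle, using a reference section $F\subset X$ and arguing that everything of interest happens inside the common piece $U\cong(\Sigma\setminus D^2)\times S^2$. But the construction of \cite{LWXZ} is carried out specifically for the product $\Sigma\times S^2$: it needs a reference embedding with geometric dual $\{b_0\}\times S^2$ and trivial normal bundle, the fibration tower of embedding spaces, and control of the indeterminacy coming from $d_3:\pi_3(M_2)\to\mathbb{Z}[\pi_1(M_2)\setminus\{1\}]$. None of this is local to $U$: the Dax invariant is computed from a homotopy of embeddings through the whole ambient manifold, and its indeterminacy subgroup depends on $\pi_3$ of a global complement, so the assertion that ``the Dax value agrees with the one calculated in the trivial bundle'' because supports lie in $U$ is not justified. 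A section of the nontrivial bundle moreover has normal bundle of odd Euler number, so the reference data differ genuinely from the trivial case. Deferring all of this with ``once these technical points are settled'' leaves the core of the theorem unproved.

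The paper avoids this difficulty entirely by a different device: it shows that the pullback of $X$ to the connected double cover $\Sigma^{(2)}\to\Sigma$ is the \emph{trivial} bundle $\Sigma^{(2)}\times S^2$ (the clutching class doubles to $0$ in $\pi_1(SO(3))\cong\mathbb{Z}/2$), lifts each $\varphi\in MCG_0(X)$ to a well-defined element of $MCG_0(X^{(2)})$, and then applies the already-established Dax homomorphism of \cite{LWXZ} on the trivial bundle upstairs. The remaining work is to check that the composite still has infinite-rank image, which is done by lifting self-referential barbell diffeomorphisms and observing that the deck transformation only identifies conjugacy classes in pairs, leaving infinitely many independent values $\pm([c]+[c]^{-1})\pm([c']+[c']^{-1})$. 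If you want to keep your cut-and-paste picture of $X$, you would still need either to prove the Dax theory for the twisted bundle from scratch or to reroute through a cover where the bundle trivializes, as the paper does.
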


The above theorem also suggests that the Torelli Group\footnote{The \emph{Torelli group}, denoted $\mathcal{I}_g$, is a subgroup of the mapping class group $\mathrm{MCG}(\Sigma_g)$ of a closed oriented surface $\Sigma_g$($g \geq 2$). It is defined as the kernel of the natural homomorphism
\[
\Phi: \mathrm{MCG}(\Sigma_g) \longrightarrow \mathrm{Sp}(2g, \mathbb{Z}),
\]
which sends a mapping class to its induced action on the first homology group $H_1(\Sigma_g; \mathbb{Z})$, preserving the symplectic intersection pairing. That is,
\[
\mathcal{I}_g := \ker\left( \mathrm{MCG}(\Sigma_g) \to \mathrm{Aut}(H_1(\Sigma_g; \mathbb{Z})) \right) = \ker(\Phi).
\]} of $X$ containing $MCG_0(X)$ may be of infinite rank.

Section 1.1 provides explicit constructions of the nontrivial $S^2$
-fiber bundles over both orientable and nonorientable closed surfaces and shows the uniqueness of such bundles in each case. We also give a brief introduction to barbell diffeomorphism. In section 1.3, We discuss some properties of Mapping class group of $\Sigma \times S^2$ in \cite{LWXZ}.  In section 2, we apply the generalization of relative Dax invariant to prove our main theorem \ref{1.1} in the sense of lifting maps in $M$.

\subsection{Nontrivial $S^2$ Fiber bundles over surface}

In this section, we study the construction of two families of orientable nontrivial fiber bundles from the view of handle decompositions, $\Sigma_g \ltimes S^2$ and $\Sigma'_g \ltimes S^2$, where $\Sigma_g$ is an orientable closed surface with genus $g$ and $\Sigma'_g$ is an unorientbale closed surface with genus $g$ for $g$ being an arbitrary positive integer.

\subsubsection{}
We may first consider orientable $S^2$ fiber bundle with base space $\Sigma_g$. By obstruction theory, we claim that there is only one such non-trivial bundle $\Sigma_g \ltimes S^2$. Since $\Sigma_g$ is a closed surface, by relative handle decomposition\cite{GS}, we can fix a handle decomposition of $\Sigma_g$ s.t it has 1 0-handle $H_0$, 2g 1-handles and 1 2-handle $H_2$. We denote $H_1$ as the union of all 1-handles $H_1^i$, $\forall i\in I$. 

There is only one $S^2$ bundle over $H_0\cup H_1$ and one $S^2$ bundle over $H_2$.
Consider $S^2 \rightarrow X \xrightarrow{\pi} \Sigma_g$. We claim that $\pi^{-1}(H_0\cup H_1) \cong (H_0 \cup H_1) \times S^2$ and $\pi^{-1}(H_2) \cong H_2 \times S^2$ where $D^1\cup_{S^0}D^1 = S^1$.   Notice that the classification of fiber bundles is preserved by homotopy of base space. Since $H_0\cup H_1 \simeq S^1 \vee \dots \vee S^1 $, the type of fiber bundle is totally depends on how each $S^1$ over fiber $S^2$ looks like. We now switched to consider fiber $S^2$ over $S^1$. By the clutching construction, we can write it as $D^1 \times S^2 \cup_{\phi: S^0 \rightarrow Diff^+(S^2)} D^1 \times S^2$. Then $\phi$ gives an element in $\pi_0(Diff^+(S^2)) \cong \pi_0(SO(3)) = 0$. Hence $\phi$ has only one choice and it induce the trivial $S^2$ fiber bundle over $H_0\cup H_1$. For a finitely many 1-handles, we then add each 1-handle inductively. Consider $S^2$ fiber bundle over $S^1 \vee S^1$ by constructing $S^1\times S^2 \vee S^1 \times S^2$, we can also write it as $$S^1\times S^2 \cup_{\phi: pt \rightarrow Diff^+(S^2)}S^1 \times S^2$$. From above we know that $Diff^+(S^2)$ is path connected, thus there is only one such $\phi$ exists up to homotopy. Therefore, there is only one $S^2$ fiber bundle over $S^1 \times S^1$, which is the trivial bundle $(S^1 \vee S^1) \times S^2$. Adding more 1-handles inductively gives homotopically more $S^1$ as wedge sum and we will still obtain a trivial bundle by going through analogous progress as above. For $S^2$ bundle over $H_2$, the contractible base space $H_2 \cong D^2$ gives trivial monodromy, which also gives trivial fiber bundle only.

We therefore consider the clutching construction of gluing the two parts $H_2 \times S^2$ and $(H_0 \cup H_1) \times S^2$. When we attach a 2-handle to $H_0 \cup H_1$, the attaching sphere is $\partial(H^2) = \partial(D^2) =S^1$ and we therefore care about how to glue two copies $S^1\times S^2$ together . (The map of attaching region can be extended from the attaching map of attaching sphere by the Tabular Neighborhood Theorem\cite{GS}). To clutch the two parts together with orientation preserved, for each point in $S^1$ we glue the two corresponding $S^2$ together by self-diffeomorphsm and send $S^1$ to $Diff^+(S^2)$. We denote $$\Sigma_g \ltimes_{\phi} S^2 = H_2 \times S^2 \cup_{\phi: S^1 \rightarrow Diff^+(S^2)} (H_0 \cup H_1) \times S^2$$. Notice that for each $\phi$, it gives an element in $\pi_1(Diff^+(S^2))$. We have $\pi_1(Diff^+(S^2)) \cong \pi_1(SO(3)) \cong \mathbb{Z}/2$. Fiber bundle up to isomorphism is totally determined by the clutching map up to homotopy. Therefore there are at most two distinct orientable bundles and one is the trivial bundle $\Sigma \times S^2$ with the clutching map corresponding to a representor of the trivial element in $\pi_1(SO(3))$, denote $\phi'$ i.e $[\phi'] = [id]$.

We prove the following lemma to show that there are exactly two distinct orientable $S^2$ fiber bundles over $\Sigma_g$, the trivial bundle and a nontrivial one.

\begin{Lemma}\label{Lemma 1.2}
$\Sigma_g \ltimes_{\phi} S^2$ is not isomorphic to $\Sigma_g \times S^2$ for $[\phi] \neq [id] \in \pi_1(SO(3)) $.
\end{Lemma}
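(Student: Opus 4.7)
I plan to distinguish the two bundles by computing the second Stiefel--Whitney class $w_2$ of the total space, which is a diffeomorphism (hence bundle-isomorphism) invariant.

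First, $\Sigma_g\times S^2$ is spin: its tangent bundle splits as $\pi_1^*T\Sigma_g\oplus\pi_2^*TS^2$, and each oriented rank-$2$ summand has $w_2=0$ because the Euler characteristics $2-2g$ and $2$ are even. With all $w_1$'s vanishing by orientability, the Whitney sum formula yields $w_2(\Sigma_g\times S^2)=0$.

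For the other side, I would realize $X := \Sigma_g\ltimes_\phi S^2$ as the unit sphere bundle $S(V)$ of an oriented rank-$3$ real vector bundle $V\to\Sigma_g$ whose clutching map is a lift $\tilde\phi : S^1 \to SO(3)$ of $\phi$; this reduction of structure group is valid because $\Diff^+(S^2)\simeq SO(3)$. From the splitting $TX\cong T_{\mathrm{vert}}\oplus\pi^*T\Sigma_g$ together with $\pi^*V\cong T_{\mathrm{vert}}\oplus\underline{\mathbb{R}}$ (the tautological radial line is trivialized by its own nonvanishing section), I obtain $w_2(X)=\pi^*w_2(V)$. Applying obstruction theory to the handle decomposition, every oriented rank-$3$ bundle is trivializable over the $1$-skeleton $H_0\cup H_1$ since $\pi_0(SO(3))=0$, so the only obstruction to global trivialization of $V$ is $w_2(V)\in H^2(\Sigma_g;\mathbb{Z}/2)\cong\mathbb{Z}/2$, and by a direct identification of the clutching data this class equals $[\tilde\phi]\in\pi_1(SO(3))=\mathbb{Z}/2$. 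Finally $\pi^*$ is injective on $H^2(-;\mathbb{Z}/2)$ because $\pi$ admits a section (the section-extension obstructions $H^i(\Sigma_g;\pi_{i-1}(S^2))$ vanish). Thus $[\phi]\neq[\id]$ forces $w_2(V)\neq 0$, hence $w_2(X)\neq 0 = w_2(\Sigma_g\times S^2)$, and the two bundles are not isomorphic (in fact not even diffeomorphic).

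I expect the main obstacle to be the precise identification of the clutching class $[\phi]$ with the obstruction class $w_2(V)$. This is an instance of the standard classification $[\Sigma_g,BSO(3)]\cong H^2(\Sigma_g;\mathbb{Z}/2)$, but phrasing it in the explicit handle-theoretic language the paper adopts requires some bookkeeping, especially tracking the homotopy equivalence $\Diff^+(S^2)\simeq SO(3)$ at the level of $\pi_1$ to be sure that the clutching lifts correctly to $SO(3)$ without changing the $\mathbb{Z}/2$-class.
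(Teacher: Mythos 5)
Your proof is correct, but it takes a genuinely different route from the paper. The paper argues directly at the level of clutching data: it assumes a bundle isomorphism $\Psi$ exists, writes the transition functions of both bundles over the overlap circle $\partial H_2$, and deduces $\phi'\circ\alpha|_{S^1}=\alpha|_{S^1}\circ\phi$, where the comparison map $\alpha|_{S^1}$ is null-homotopic because it extends over the contractible $2$-handle; taking classes in $\pi_1(SO(3))$ forces $[\phi']=[\phi]$. Your argument instead distinguishes the two \emph{total spaces} by $w_2$: $\Sigma_g\times S^2$ is spin, while $\Sigma_g\ltimes_\phi S^2=S(V)$ has $w_2=\pi^*w_2(V)\neq 0$ since $w_2(V)$ is the primary (and only) obstruction to trivializing $V$ and equals the clutching class, and $\pi^*$ is injective because the sphere bundle of a rank-$3$ bundle over a surface always admits a section. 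Each step checks out (the identification $w_2(V)=[\tilde\phi]$ is exactly the standard obstruction-theoretic classification $\mathrm{Vect}^3_{SO}(\Sigma_g)\cong H^2(\Sigma_g;\mathbb{Z}/2)$, and Smale's theorem guarantees the lift of the clutching map to $SO(3)$ is canonical up to homotopy). The trade-off: the paper's argument is self-contained within the handle/clutching framework it sets up and only rules out bundle isomorphism, whereas yours requires the characteristic-class machinery but yields the strictly stronger conclusion that the total spaces are not even diffeomorphic (in fact not homotopy equivalent), which is useful information the paper does not record.
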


\begin{proof}

Suppose that there exists an isomorphism $\Psi: \Sigma_g \ltimes_{\phi} S^2 \rightarrow \Sigma_g \times S^2 $, then $\Psi$ gives a global trivialization of $\Sigma_g \ltimes_{\phi} S^2$. We therefore restrict the trivialization on $H_0\cup H_1$ and $H_2$. $\forall$ trivial chart in X, we have $$\Psi|_{\pi^{-1}(U)}: \pi^{-1}(U) \rightarrow U \times S^2 \simeq \pi'^{-1}(U)$$ $$(x,y) \longmapsto (x,\alpha(x)y);  \ \alpha: U \rightarrow SO(3)$$

As we have shown before, there is only one orientable $S^2$ fiber bundle over $H_0\cup H_1$ and $H_2$ respectively. Thus, for any $x \in H_0\cup H_1$, there exists a local trivialization $x\in V_1$ and a map: $$g_1|_{\pi^{-1}(V_1)}: \pi^{-1}(V_1) \rightarrow V_1 \times S^2 $$. For any $x\in H_2$, we also have a local trivialization  $x\in V_2$ and a map: $$g_2|_{\pi^{-1}(V_2)}: \pi^{-1}(V_2) \rightarrow V_2 \times S^2 $$. Since by clutching construction, $X$ is obtained by clutching two copies of $S^1 \times S^2$ via map $\phi \in SO(3)$, we have $$g_1 = (id, \phi) \circ g_2$$. For trivial bundle $\Sigma \times S^2$, we also have $$g'_1 = (id, \phi') \circ g_2'$$ where $\phi'\simeq 0 \in \pi_1(SO(3))$ and $g_1', g_2'$ are two local trivializations of $H_0\cup H_1$ and $H_2$ respectively. While for each chart $U\cap V_i\subset X$, the bundle isomorphism gives the following diagram: $$g_i' \circ \Psi = (id, \alpha) \circ g_i, i= 1,2$$.

\begin{equation*}
\begin{tikzcd}
\pi^{-1}(U_i) \arrow[r,"\Psi"] \arrow[d,"g_i"'] 
  & \pi^{-1}(U_i) \arrow[d,"g_i'"] \\
U_i \times S^2 \arrow[r, "{(\mathrm{id},\alpha)}"'] 
  & U_i \times S^2
\end{tikzcd}
\end{equation*}

Take $(\cup_{i\in I} V^i_1)\cap U$ be an open cover of $S^1$ in $H_0\cup H_1$ and $(\cup_{i\in I} V^i_2)\cap U$ an open cover of $S^1$ in $H_2$.
Restricting to $X|_{\cup_{i\in I} V^i_2}$, we have $$g_1' \circ \Psi = (id, \phi') \circ g_2' \circ \Psi = (id, \phi') \circ (id, \alpha) \circ g_2 $$. By restricting to $X|_{\cup_{i\in I} V^i_1}$, we can also write is as $$g_1' \circ \Psi = (id,\alpha)\circ g_1 = (id,\alpha) \circ (id, \phi) \circ g_2$$.

Since $g_i, g_i'$ are trivializations, hence isomorphism. We cancel $g_2$ on both side by composing an inverse function of $g_2$ and obtain $(id,\phi') \circ (id,\alpha)|_{S^1} = (id,\alpha)|_{S^1} \circ (id,\phi)$, which gives $$\phi' \circ \alpha|_{S^1} = \alpha|_{S^1} \circ \phi$$.
We want to show that $\alpha|_{S^1}$ is trivially homotopic. Notice that by viewing the map in $H_2$, the contractible space induces trivial homotopy map: $\alpha^*: \pi_1(H_2) \rightarrow \pi_1(SO(3))$. We can view $\alpha|_{H_2} = \alpha|_{D^2}$ as an extension of $\alpha|_{S^1}$.Hence $\alpha|_{S^1}$ is trivial homotopic.

Thus, taking homotopy classes in above equation, we have:
\[
[\phi'] = [\phi] \in \pi_1(\mathrm{SO}(3)).
\]

\end{proof}



\subsubsection{}Now we study the orientbale bundle with fiber $S^2$ over base space that is nonorientable closed surfaces $\Sigma'$ with some finite number $g$ i.e. $\Sigma' = \#_{g} \mathbb{RP}^2$. We decompose  $\Sigma'$ into handles s.t. it has 1 0-handle, 1 2-handle and some 1-handles. The orientability of a surface is determined by the framing of the attaching spheres of all 1-handles \cite{GS}. There is a canonical bijection between the set of framings and $\pi_{k-1}(O(n-k))$ for k-handle attachment on $\partial(X^n)$. Attaching 1-handles on 0-handle, we have  $\pi_0(O(1)) \cong \mathbb{Z}_2$, there are only two choices of framings for 1-handle. If all the 1-handles have trivial framing on the attaching sphere, then the surface is therefore orientbale. Thus for an nonorientable surface, there exists at least one 1-handle having nontrivial framing. We start by recalling the handle construction of orientbale closed surface with genus $g$, consisting of 1 0-handle, 2g 1-handle and 1 2-handle. Assigning 1-handles order from 1 to $2g$, we change the framing of some of them. 
Therefore there are $2^{2g}-1$ ways to construct an nonorientable surface from a the handle decomposition of orientable closed surface.  We claim that the above construction gives $2^{2g-1}-1$ different nontrivial fiber bundles over some $\Sigma'$. 

\begin{remark}
    Each $H_1$ can be viewed as a thickened 1-cell, hence $H_0\cup H_1 \simeq \vee_{i\in [1,g]} S^1_i $. The framing of 1-handle doesn't change the homotopy type of $H_0\cup H_1$. Therefore, for fixed $H_0\cup H_1$, there is still only one trivial $S^2$ fiber bundle over $H_0\cup H_1$.
\end{remark}

Suppose there is a bundle isomorphism between any two such fiber bundles $X_1$ and $X_2$, then we may simply restrict the base space to $H_0 \cup H_1$ inheriting the notation from above. Since $X_1$ and $X_2$ are constructed from distinct manipulations of 1-handles attachment of $\Sigma$, the corresponding $H_0\cup H_1$ are therefore different. Then there exists $H^i_1$ in $X_1$ and $X_2$ s.t. the attaching maps have distinct framings. Considering the following fiber bundle $$S^2 \rightarrow X_j|_{H_0\cup H^i_1} \rightarrow H_0\cup H^i_1, \ j=1,2$$. However, the two pullbacks of $H_0\cup H^i_1$ should be isomorphic under the restriction of isomorphism, which is a contradiction.

The $H_0\cup H_1$ is a punctured nonorientable surface with boundary. Observe that while attaching $H_0\cup H_1 \times S^2$ and $H_2 \times S^2$, the attaching sphere $S^1$ is corresponding to the boundary of $\partial H_2 = \partial D^2$. In $H_0 \cup H_1$, the $S^1$ is homotopic to a loop going through all generator twice. For those mobius band 1-handles, the nonorientation part is vanished. Hence we are back to orientable $H_0\cup H_1$ case as above. By the same argument of clutching construction, there are at most two fiber bundles. Therefore by \ref{Lemma 1.2}, there is only one nontrivial orientable $S^2$ fiber bundle over $\tilde{\Sigma}$.

\subsection{Relative Dax invariant in $\Sigma \times S^2$ and barbell diffeomorphism}

One of the main techniques used in the proof of the main theorem is the barbell diffeomorphism\cite{BG21} and generalization of relative Dax invariant\cite{LWXZ}.

The  generalization of relative Dax invariant\cite{LWXZ} is an extension from the Dax invariant constructed by Gabai \cite{BG21} in 2021. Gabai defined the original relative Dax invariant for two disk embeddings in aribtrary 4-manifold. The two embedded disks are restricted to be properly embedded and homotopic to each other relative to the boundary. Considering the 1-parameter family of arcs for each embedded disk, observing that all such arcs in the two families sharing the same endpoints, we can obtain a loop in $Emb_{\partial}(I,M)$ i.e the embedding space of arc in $M$. Fix a handle decompostion of $\Sigma_g$ s.t there is exactly one 0-handle $H_0$, 2g 1-handles $\{H^i_1\}_{i \in \{1, \dots, 2g\}}$ and one 2-handles $H_2$ where $b_0\in H_0$. We denote $H_1$ the union of all 1-handles. Let $U \subset S^2$ be a disk containing $b_1$, denote $M_1 = \overline{M-H_0\times U}$ and $M_2 = \overline{M-(H_0\cup H_1)\times U}$. 

After smoothing corner, both $M_1$ and $M_2$ are codimension 0 submanifold. When we consider embedding space of 0-codim submanifold, the base point is set to be the restriction of the basepoint in the embedding space from the original manifold. The relative Dax invariant is therefore defined based on the Dax isomorphism\footnote{The isomorphism is the same in \cite{LWXZ}: Dax: $\pi_1^D(Emb_{\partial}(I,X_2);I_0) \rightarrow \mathbb{Z}[\pi \backslash \{1\}] / Im(d_3)$. Notice that by chasing diagram $\pi_1(X_2) \cong \pi_1(X)$, we denote it by $\pi$. $\forall [\gamma] \in \pi_1^D(Emb_{\partial}(I,X_2);I_0)$, we send it to $\pi_2(X) = \pi_1(Map_{\partial}(I, X))$. By Smale's Theorem, $\pi_1(Imm(I,X) \cong \pi_1(Map(I,M))$. We can therefore extend the $\gamma$ to a map $h: D^2 \rightarrow Imm_{\partial}(I,X_2)$. After obtaning elements in $\pi$ from double points resolution, the image of $Dax([\gamma])$ is defined to be the sum of these elements in $\pi$ with signs corresponding to local orientations of the double points.} by considering the exact sequence: 

\begin{equation}
    \dots \rightarrow \pi_3(M_2) \xrightarrow{d_3} \mathbb{Z}[\pi_1(M_2)\backslash \{1\}] \xrightarrow{\tau} \pi_1(Emb(I, M_2; I_0) \xrightarrow{F} \pi_2(M_2) \dots
\end{equation}

In 2025, Lin, Wu, Xie and Zhang\cite{LWXZ} extend the Dax invariant from Disk embedding to closed orientable embedded surface in $\Sigma_g \times S^2$ while $\Sigma_g$ denotes the closed surface with genus $g$. The proof is based on the study of embedding of Handles\footnote{In \cite{LWXZ}, they show that the relative Dax invariant is independent of handle decomposition of $\Sigma$ } of $\Sigma_g$ in the total space. The embedding of $2-$handle of $\Sigma_g$ can be viewed as an embedding of $D^2$ after some manipulations, which fits in the original definition of Gabai's consturction of disk embedding. The relative Dax invariant is extended from Dax isomorphism in \cite{BG21}. They analyze the impact of $1-$handles movement by exploring the following filtration tower.

\begin{equation}
    Emb^{f_0}(\Sigma, M) \xrightarrow{r_2} Emb^{f_0}(H_0 \cup H_1, M) \xrightarrow{r_1} Emb^{f_0}(H_0, M) \xrightarrow{r_0} *
\end{equation}

Where the map $r_1$, $r_2$ are given by restrictions. We may consider $F_i$ to be the preimage of the base point under $r_j$ $\forall i = 0, 1, 2$ and $F_j$ serves as a fiber of $r_j$.

We may write two long exact sequences based on the fibration tower:

\begin{equation}
\pi_2(\mathrm{Emb}^{[f_0]}(H_0, M)) \to 
\pi_1(F_1) \xrightarrow{r_1*} 
\pi_1(\mathrm{Emb}^{[f_0]}(H_0 \cup H_1, M)) \to 
\pi_1(\mathrm{Emb}^{[f_0]}(H_0, M))
\end{equation}

\begin{equation}
\pi_1(\mathrm{Emb}^{[f_0]}(H_0 \cup H_1, M)) \to 
\pi_0(F_2) \xrightarrow{r_2*} 
\pi_0(\mathrm{Emb}^{[f_0]}(\Sigma, M)) \to 
\pi_0(\mathrm{Emb}^{[f_0]}(H_0 \cup H_1, M)) = 0
\end{equation} 
Since $F_0$ is simple connected, $r_1*$ is surjective. Since $\mathrm{Emb}(H_0 \cup H_1, X)$ is also connected due to the fact of $F_1$ being connected, we have:

\begin{equation}\label{2.4}
    q_0: \pi_0(\mathrm{Emb}(\Sigma^{[f_0]}, X)) \rightarrow \pi_0(F_2)/\pi_1(F_1)
\end{equation}

Then they show that the relative Dax map between two $\Sigma_g$ embeddings satisfies all properties of the original Dax invariant. For notation convinience, we use $\Sigma$ as a closed surface with finite genus. Conisder $\{b_0\} \in \Sigma$ be a base point. Here we denote $\mathscr{I}_0: \Sigma \rightarrow \Sigma \times S^2$ be the embedding having $0$ double intersection point with $\{b_0\} \times S^2$ after resolution.

\begin{Theorem}[\cite{LWXZ}]\label{thm: relative Dax invariant}
 Let $\zeta = (\pi_1(M,b)\setminus \{1\})/\text{conjugaton}.$ There exists a map 
 \[
 \Dax: \mathcal{E}_0\times \mathcal{E}_0\to \mathbb{Z}[\zeta]
 \]
 such that the following properties hold.
  \begin{enumerate}
    \item If $i_1$ is smoothly isotopic to $i_2$ relative to $b_0$, then $\Dax(i_1, i_2)=0$.
    \item $\Dax(i_1, i_2)+\Dax(i_2, i_3)=\Dax(i_1, i_3)$.
    \item If $f$ is a diffeomorphism on $M$ that is homotopic to the identity relative to the base point $b$, then $\Dax(i_1,i_2)=\Dax(f\circ i_1,f\circ i_2)$.
     \item For every $g\neq 1\in \zeta$, there exists a diffeomorphism $f:M\to M$ that is homotopic to the identity relative to $b$, such that $\Dax(\mathscr{I}_0, f\circ \mathscr{I}_0)=g+g^{-1}$, and that $\{b_0\}\times S^2$ is a common geometric dual of $\mathscr{I}_0$ and $f\circ \mathscr{I}_0$. 
  \end{enumerate}  
\end{Theorem}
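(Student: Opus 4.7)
The plan is to reduce the surface case to the disk case, then apply Gabai's relative Dax invariant \cite{BG21}, organizing the reduction through the fibration tower (1.2) and the surjection $q_0$ in (\ref{2.4}).

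First, I would normalize the two embeddings on the $1$-skeleton. Given $i_1,i_2\in\mathcal{E}_0$, both restrict to $H_0\cup H_1$ to give classes in $\pi_0(\mathrm{Emb}^{[f_0]}(H_0\cup H_1,X))$. From the exact sequence (1.3) together with the fact that $F_0$ is simply connected and $F_1$ is connected, this space is path connected, so after an ambient isotopy I may assume $i_1|_{H_0\cup H_1}=i_2|_{H_0\cup H_1}$. Once this normalization is fixed, $i_1|_{H_2}$ and $i_2|_{H_2}$ are two properly homotopic embeddings of the $2$-handle $D^2=H_2$ into the complement $M_2$, agreeing on $\partial H_2=S^1$. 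To such a pair Gabai's construction assigns a disk-level Dax class in $\mathbb{Z}[\pi_1(M_2)\setminus\{1\}]/\mathrm{Im}(d_3)$, which I identify with a class in $\mathbb{Z}[\pi\setminus\{1\}]/\mathrm{Im}(d_3)$ via $\pi_1(M_2)\cong\pi_1(X)=\pi$ as in (1.1).

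Next, I would establish well-definedness by tracking the ambiguity introduced by the normalizing isotopy. Any two choices of isotopy making $i_1,i_2$ agree on $H_0\cup H_1$ differ by an element of $\pi_1(\mathrm{Emb}^{[f_0]}(H_0\cup H_1,X))$, which is hit by $\pi_1(F_1)$. This is exactly the quotient appearing in (\ref{2.4}). Geometrically, a loop in $\pi_1(F_1)$ amounts to a $1$-handle slide across the arc carrying a double point in Gabai's construction, and the effect on the disk-Dax output is conjugation in $\pi$ by the homotopy class of the handle trace. Passing to $\zeta=(\pi\setminus\{1\})/\mathrm{conjugation}$ therefore kills precisely this ambiguity, and the resulting class in $\mathbb{Z}[\zeta]$ is well defined. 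I set $\Dax(i_1,i_2)$ equal to this descended class.

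Finally, I would verify the four listed properties. Property (1) is immediate: a smooth isotopy rel $b_0$ between $i_1$ and $i_2$ produces disk isotopies after normalization, and disk-Dax vanishes on such pairs. Property (2) follows from the additivity of Gabai's disk-Dax under concatenation of the intermediate homotopies. Property (3) is naturality of the disk construction: since $f\simeq \mathrm{id}$ rel $b$, composing by $f$ leaves all $\pi_0$ and $\pi_1$ data unchanged. For property (4), I would use a barbell diffeomorphism \cite{BG21} whose barbell arc represents the class $g\in\pi$; the standard resolution-of-intersections computation in the disk case gives exactly two double points with group elements $g$ and $g^{-1}$ and opposite signs that cannot cancel at the level of $\mathbb{Z}[\zeta]$. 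The geometric dual $\{b_0\}\times S^2$ persists because the support of a barbell can be taken disjoint from it.

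The main obstacle I expect is step three: proving that the action of $\pi_1(F_1)$ on the disk-Dax output is exactly conjugation in $\pi$, with no further relations arising from $1$-handle sliding and no information lost when passing from $\pi$ to $\zeta$. This demands a careful choice of basepaths and a clean identification of the connecting sphere through which $1$-handles are slid, so that the induced action is visibly a whiskering change. The realization part of property (4) is technically involved but follows the template of \cite{BG21}, the subtlety being a generic position argument ensuring that the barbell support, the geometric dual $\{b_0\}\times S^2$, and the arcs used in Gabai's double-point count can be arranged pairwise transverse with the predicted intersection pattern.
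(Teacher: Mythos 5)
This theorem is not proved in the paper at all: it is quoted verbatim from \cite{LWXZ}, and the surrounding text in Section 1.2 only sketches the strategy of that reference (reduction to Gabai's disk-level Dax invariant via a handle decomposition of $\Sigma$, with the $1$-handle ambiguity controlled by the fibration tower (1.2)--(1.4) and the quotient $\pi_0(F_2)/\pi_1(F_1)$ in (\ref{2.4})). Your proposal reconstructs essentially that same strategy, so at the level of architecture it is consistent with what the paper attributes to \cite{LWXZ}; there is simply no in-paper proof to compare it against line by line.

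Two substantive points in your sketch deserve correction or elaboration. First, in your argument for property (4) you assert that the two double points carry group elements $g$ and $g^{-1}$ \emph{with opposite signs}; that would yield $g-g^{-1}$, which is not the claimed value $g+g^{-1}$ and which could vanish in $\mathbb{Z}[\zeta]$ when $g$ is conjugate to its inverse. The barbell computation in \cite{BG21} and \cite{LWXZ} produces the two contributions with the \emph{same} sign (this is also how the paper later uses the invariant, writing the associated class as $\pm([c_1]+[c_1]^{-1})$), so the sign bookkeeping in your realization step needs to be redone. Second, your well-definedness step conflates two different quotients: Gabai's disk invariant lives in $\mathbb{Z}[\pi\setminus\{1\}]/\mathrm{Im}(d_3)$, while the target of the theorem is $\mathbb{Z}[\zeta]$ with $\zeta$ the set of nontrivial conjugacy classes. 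You need to argue both that the $\pi_1(F_1)$-ambiguity acts by conjugation (as you say) \emph{and} that the passage to conjugacy classes is compatible with, or absorbs, the $\mathrm{Im}(d_3)$ indeterminacy for $M=\Sigma\times S^2$; as written, the proposal does not address the latter. These are exactly the technical points that occupy the actual proof in \cite{LWXZ}, so identifying them as the crux is right, but the sketch as it stands does not close them.
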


We then review the definition of Barbell diffeomorphism by inheriting notations from \cite{BG21} . The self-referential barbell diffeomorohism\cite{LWXZ} is defined to be a barbell diffeomorphism on $M_2$. The self referential barbell diffeomorphism is first constructed to explore the property of embedded geometrically sphere dual surfaces in $M$ that are homotopic but not isotopic to each other.

For two disjoint embedded arcs in $D^4$, denoted $I_0\& I_1$, let $B = D^4\setminus v(I_0\cup I_1)$. Consider the well-defined arc pushing map $$\partial: \pi_1(Emb^{Fr}_{\partial}(I,D^4\setminus v(I_1)),I_0) \rightarrow \pi_0(Diff_{\partial}(D^4\setminus v(I_1),I_0)$$. The map is obtained by isotopy extension theorem, i.e any embedding of arcs can be extended to a diffeomorphism in the complement with boundary homotopic. The standard barbell diffeomorphism is a representative $\beta: B \rightarrow B$ in $\pi_0(Diff_{\partial}(D^4\setminus v(I_1),I_0)$ as an image over the arc pushing map.



\subsection{Mapping class group of $\Sigma \times S^2$}

In fact, one can form a homomorphism by relative Dax map $\phi_1: MCG_0(M) \rightarrow \mathbb{Z}[\zeta]$ sending $[f] $to $Dax(f_0,f\circ f_0)$. \cite{LWXZ} shows that for trivial fiber bundle $M = \Sigma \times S^2$, there is always a surjective homomorphism $\phi: MCG_0(M) \rightarrow \mathbb{Z}^{\infty}$. $\phi$ is derived from $$\phi_0: MCG_0(M) \rightarrow \mathbb{Z}[\zeta]$$ sending $[f]$ to $Dax(f_0,f\circ f_0)$. Here the Dax map refers to the relative Dax invariant.$\phi_0$ being a homomorphism is proved by relative Dax properties of (2) and (3) in \cite{LWXZ}.  
Futhermore, let $A$ denote the set of orbits under group action $Aut(\pi) \times \mathbb{Z}/2$ on $\zeta$, here $\mathbb{Z}/2$ is an inverse action. Considerng the quotient map $q: \mathbb{Z}[\zeta] \rightarrow \mathbb{Z}[A]$. The composite map
\[
\begin{tikzcd}[column sep=large, row sep=large]
MCG(M) \arrow[r,"Dax"] &
\mathbb{Z}[\zeta] \arrow[r,"q"] &
\mathbb{Z}[A]
\end{tikzcd}
\]
is therefore the infinity surjective map in the following theorem.

\begin{Theorem}
    There exists a surjective homomorphism $\phi: MCG(M) \rightarrow \mathbb{Z}^{\infty}$ such that its restriction on $MCG_0(M)$ is also of infinite rank.
\end{Theorem}

Notice that for any embedded arc in $M$, there is a barbell diffeomorphism $\beta$ that is homotopic to identity\cite{BG21}. 

\section{On the Mapping Class Group of $\Sigma \ltimes S^2$}

In this section, we prove the main theorem \ref{1.1} by exploring the double cover of $X$ and relative Dax invariant of $\Sigma\times S^2$ constructed in \cite{LWXZ}.

Recall that $X = \Sigma \ltimes S^2$. We have constructed $X$ using handle decomposition of $\Sigma$ and the clutching construction in Section 1.1. Let $X^{(2)}$ be the double cover of $X$ with covering map $p: X^{(2)} \rightarrow X$, we therefore study the lift of diffeomorphisms of $MCG_0(X)$ in $MCG_0(X^{(2)})$ using the relative Dax invariant. Canonically, the lift in $MCG_0(X^{(2)})$ is the lift of $\phi\circ p$ for $p$ being the covering map and $\phi \in MCG_0(X)$. Notice that all cyclic covers of a surface is still a surface and $X^{(2)}$ is still a 4-dimensional Manifold. In the meanwhile, if we consider $S^2$ fiber over non-orientable surfaces $\Sigma'$, we can still lift $\Sigma'$ to orientable double cover and then lift $\Sigma'\ltimes S^2$ to orientable double cover, regardless that the fiber bundle is orientable or not.

Suppose $\Sigma$ has genus g, let $p: \Sigma^{(2)} \rightarrow \Sigma$ be the covering map with the double cover $\Sigma^{(2)}$ of $\Sigma$. Notice that $\Sigma^{2}$ is also a closed surface with genus $2g-1$ by Euler Characristic. We consider the pullback fiber bundle $p^*X$ and claim that $p^*X$ is therefore the double cover of $X$. $$p^*X = \{(a,x)| a \in  \Sigma', x \in X, \pi(x) = p(a) \}$$. $\forall a \in \Sigma^{(2)}$, $\pi'^{-1}(a) = (a,x)$ s.t. $\pi(x) = p(a)$ $\implies$ $x \in \pi^{-1}( p(a)) \cong S^2$. Following the definition of fiber bundle $S^2 \rightarrow X \xrightarrow{\pi} \Sigma$, we have $\pi^{-1}\circ p(a) \cong p(a) \times S^2 \cong S^2$.

Let $\pi'$ denote the natural projection s.t. $\pi'(a,x) =a$, we therefore have the following pullback fiber bundle:
$$S^2 \rightarrow p^*X \xrightarrow{\pi'} \Sigma^{(2)}$$

\begin{Theorem}\label{double-trivial}
    For $X = \Sigma \ltimes S^2$, the double cover of $X$ is $X^{(2)} \cong p^*X \cong \Sigma^{(2)} \times S^2$.
\end{Theorem}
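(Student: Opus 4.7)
The plan is to establish the two isomorphisms in sequence: first identify $X^{(2)}$ with the pullback $p^*X$, then show the latter is trivial. For the first step, the essential input is that $S^2$ is simply connected, so the homotopy long exact sequence of the fibration $S^2\to X\xrightarrow{\pi}\Sigma$ gives $\pi_*:\pi_1(X)\xrightarrow{\cong}\pi_1(\Sigma)$; the Galois correspondence then induces a bijection between connected double covers of $X$ and of $\Sigma$. The pullback $p^*X = \{(a,x)\in \Sigma^{(2)}\times X : \pi(x)=p(a)\}$, equipped with the projection $(a,x)\mapsto x$, is manifestly a connected two-sheeted cover of $X$: the fiber above $x$ is the two-point set $p^{-1}(\pi(x))$, and connectedness is inherited from $\Sigma^{(2)}\to\Sigma$. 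Under the covering-space bijection this cover corresponds to $p:\Sigma^{(2)}\to\Sigma$, so $X^{(2)}\cong p^*X$.

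For the second step, I would invoke the classification from Section 1.1: orientable $S^2$-bundles over any closed surface are classified by the clutching class on the $2$-handle, i.e.\ by an element of $\pi_1(\Diff^+(S^2))\cong \pi_1(\SO(3))\cong\mathbb{Z}/2$. Via Smale's theorem and obstruction theory this is equivalent to classification by a characteristic class $c\in H^2(\Sigma;\mathbb{Z}/2)\cong \mathbb{Z}/2$, and by naturality of classifying maps the pullback $p^*X$ is classified by $p^*c$. The map $p^*$ on top mod-$2$ cohomology vanishes because $p$ is a $2$-sheeted cover: the mod-$2$ fundamental class exists on any closed connected surface, and
\[
\langle p^*c,\,[\Sigma^{(2)}]_{\mathbb{Z}/2}\rangle \;=\; \langle c,\,p_*[\Sigma^{(2)}]_{\mathbb{Z}/2}\rangle \;=\; 2\,\langle c,\,[\Sigma]_{\mathbb{Z}/2}\rangle \;=\; 0 \in \mathbb{Z}/2.
\]
Since $H^2(\Sigma^{(2)};\mathbb{Z}/2)\cong \mathbb{Z}/2$ is realized by pairing with the mod-$2$ fundamental class, this forces $p^*c=0$ and hence $p^*X \cong \Sigma^{(2)}\times S^2$.

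The argument runs uniformly whether $\Sigma$ is orientable or nonorientable, since mod-$2$ fundamental classes, the $\mathbb{Z}/2$-classification of Section 1.1, and the simple connectedness of $S^2$ are all insensitive to orientability of the base. The step requiring the most care is reconciling Section 1.1's concrete clutching description with the cohomological characteristic class $c$, as this uses obstruction-theoretic language not explicitly developed earlier in the paper. A more hands-on alternative that bypasses this is to lift a handle decomposition of $\Sigma$ to one of $\Sigma^{(2)}$ containing two $2$-handles, note that each carries the same clutching class $[\phi]\in \mathbb{Z}/2$ (since each lifted $2$-handle maps diffeomorphically onto the original), and observe that the two local contributions to the total obstruction cancel modulo $2$.
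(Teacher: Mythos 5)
Your proof is correct, and its second half takes a genuinely different route from the paper. For the identification $X^{(2)}\cong p^*X$, the paper checks directly that $(a,x)\mapsto x$ is a two-sheeted covering by exhibiting evenly covered neighborhoods over local trivializations; you instead invoke $\pi_*\colon\pi_1(X)\xrightarrow{\;\cong\;}\pi_1(\Sigma)$ from the fibration's long exact sequence together with the Galois correspondence. Your version is actually the more complete one: since $\Sigma_g$ has $2^{2g}-1$ connected double covers, ``the'' double cover of $X$ is only pinned down by specifying which index-two subgroup it corresponds to, and your argument makes explicit that $p^*X$ corresponds to the same subgroup as $\Sigma^{(2)}$ --- a point the paper leaves implicit. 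For the triviality of $p^*X$, the paper argues cellularly with the clutching construction: the attaching circle of the $2$-handle lifts to two circles in $\Sigma^{(2)}$, each carrying the class $[\phi]$, and $[\phi]+[\phi]=0$ in $\pi_1(\mathrm{SO}(3))\cong\mathbb{Z}/2$. You instead package the classification as a characteristic class $c\in H^2(\Sigma;\mathbb{Z}/2)$ and kill $p^*c$ by naturality and $p_*[\Sigma^{(2)}]_{\mathbb{Z}/2}=2[\Sigma]_{\mathbb{Z}/2}=0$. These are the same computation in different clothing (your closing ``hands-on alternative'' is essentially the paper's proof verbatim); the cohomological version buys uniformity over orientable and nonorientable bases and independence of the handle decomposition, at the cost of importing the identification $[\Sigma,B\mathrm{SO}(3)]\cong H^2(\Sigma;\mathbb{Z}/2)$, which the paper never sets up. Either way, the statement implicitly requires $g\ge 1$ so that a connected double cover exists; neither you nor the paper flags this.
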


\begin{proof}
\begin{enumerate}

\item We first prove that $X^{(2)} \cong p^*X$. Let
\[
\tilde{p}: p^*X \to X, \quad (a, x) \mapsto x.
\]
be the natural projection map
We claim that $\tilde{p}$ is a $2$-sheeted covering map.

Since $p^*X$ is a $S^2$ fiber bundle, observe that for each point $x \in X$, there exists a neighborhood $U \subset \Sigma$ of $\pi(x)$ with a local trivialization in $\Sigma \times S^2$
\[
\varphi: \pi^{-1}(U) \to U \times S^2.
\]
Since $p: \Sigma^{(2)} \to \Sigma$ is a covering map, the preimage $p^{-1}(U)$ is a disjoint union of two open sets $U_1, U_2 \subset \Sigma'$ each homeomorphic to $U$ via $p$. Over each $U_i$, the pullback bundle satisfies:
\[
(p^*X)|_{\pi'^{-1}(U_i)} = \{ (a, x) \in U_i \times \pi^{-1}(U) \mid p(a) = \pi(x) \} \cong U_i \times S^2.
\]
Hence, over each $U$, we obtain:
\[
\tilde{p}^{-1}(\pi^{-1}(U)) = (p^*X)|_{p^{-1}(U)} = (U_1 \times S^2) \sqcup (U_2 \times S^2),
\]
and the map $\tilde{p}$ on this union simply collapses each $(a, f) \in U_i \times F$ to $(p(a), f) \in U \times S^2 \cong \pi^{-1}(U)$. Therefore, each point $x \in X$ has a neighborhood evenly covered by $\tilde{p}$, with exactly two disjoint preimages. It follows that $\tilde{p}: p^*X \to X$ is a $2$-sheeted covering map.

\item We now prove that $X^{(2)} \cong \Sigma'\times S^2$. Fix $\Sigma$ a handle decomposition with one $0$–handle, $2g$ $1$–handles,
and one $2$–handle attached along the loop
\[
\beta:\partial H_2\cong S^1 \longrightarrow H_0\cup H_1,
\]
representing the relator $\prod_{i=1}^g[a_i,b_i]\in\pi_1(H_0\cup H_1)$.
Any orientable $S^2$–bundle over $\Sigma$ is obtained by clutching two trivial
bundles over $H_0\cup H_1$ and over $H_2$ respectively along the overlap
$\partial H_2\simeq S^1$ by a map
\[
\phi:S^1\longrightarrow \mathrm{SO}(3).
\]
Isomorphism classes are determined by the homotopy class 
$[\phi]\in \pi_1(\mathrm{SO}(3))\cong \mathbb{Z}/2$ and the nontrivial bundle
corresponds to $[\phi]=1$.

Now pull back along the connected double cover $p:\Sigma^{(2)}\to \Sigma$. The overlap circle
$\partial H^2\simeq S^1$ lifts to the disjoint union of two circles
$S^1\sqcup S^1$ in $\Sigma'$, and the clutching along the cover is given by the
disjoint union of the two lifts of $\phi$:
\[
\tilde\phi \;=\; \phi\ \sqcup\ \phi \;:\; (S^1\sqcup S^1)\longrightarrow \mathrm{SO}(3),
\]
one on each lifted boundary component.

Since $\pi_1(\mathrm{SO}(3))\cong \mathbb{Z}/2$ is abelian, the total clutching
class of the pullback bundle is the \emph{sum} of the two boundary classes:
\[
[\tilde\phi] \;=\; [\phi] + [\phi] \;=\; 1+1 \;=\; 0 \in \mathbb{Z}/2.
\]
Therefore the pullback bundle is classified by the trivial element and hence is
isomorphic to the trivial bundle $\Sigma^{(2)}\times S^2$.
\end{enumerate}
\end{proof}

\begin{remark}

The proof also works for $\Sigma'$ since the double cover of $\Sigma'$ is also an orientable surface with finite genus.
\end{remark}

\begin{Corollary}
    For $X' = \Sigma' \ltimes S^2$, the double cover of $X$ is $X'^{(2)} \cong p^*X' \cong \Sigma'^{(2)} \times S^2$.
\end{Corollary}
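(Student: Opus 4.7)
The plan is to mirror the two-step structure of the preceding theorem, swapping in $\Sigma'$ and its orientation double cover $p:\Sigma'^{(2)}\to \Sigma'$. As the preceding remark anticipates, the key observation is that $\Sigma'^{(2)}$ is a closed \emph{orientable} surface of finite genus, so all of the ingredients — local triviality of the bundle, the clutching construction from Section 1.1.2, and the obstruction-theoretic classification in $\pi_1(\mathrm{SO}(3))\cong\mathbb{Z}/2$ — apply verbatim. The first half of the argument, showing $X'^{(2)}\cong p^*X'$ via the projection $\tilde p:(a,x)\mapsto x$, is formally identical to step (1) of the theorem: it uses only local trivializations of $X'\to\Sigma'$ over evenly-covered neighborhoods of $p$, and orientability of the base plays no role.

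For the second half, showing $p^*X'\cong \Sigma'^{(2)}\times S^2$, I would repeat the clutching computation. Using the handle decomposition of $\Sigma'$ fixed in Section 1.1.2 — one $0$-handle, a collection of $1$-handles with possibly some nontrivial framings, and one $2$-handle $H_2$ — the attaching circle of $H_2$ bounds the core disk of $H_2$ in $\Sigma'$, hence is null-homotopic there, and therefore lies in the kernel of the orientation homomorphism $\pi_1(\Sigma')\to\mathbb{Z}/2$ defining $p$. Consequently it lifts to two disjoint circles in $\Sigma'^{(2)}$, and the pullback clutching data is $\tilde\phi=\phi\sqcup\phi$. By the same abelianness argument used in the theorem, the total clutching class equals $[\phi]+[\phi]=0$ in $\pi_1(\mathrm{SO}(3))$, producing the trivial bundle.

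The main step requiring care is verifying that the nontrivial framings on some of the $1$-handles of $\Sigma'$ do not interfere with the clutching calculation. This is handled by the remark in Section 1.1.2: the framings preserve the homotopy type $H_0\cup H_1\simeq \bigvee S^1$ and only modify the smooth gluing of $H_0\cup H_1$, not the clutching map living on $\partial H_2$. Once this is settled, the rest of the computation is identical to the orientable case, so the corollary follows directly from the theorem's argument applied to the orientation double cover of $\Sigma'$.
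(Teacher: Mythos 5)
Your proposal is correct and takes essentially the same route as the paper: the paper's entire justification for this corollary is the preceding remark that the theorem's proof carries over because the (orientation) double cover of $\Sigma'$ is again a closed orientable surface of finite genus. You have merely spelled out the details the paper leaves implicit — that step (1) only uses evenly covered local trivializations, and that the null-homotopic attaching circle of $H_2$ lifts to two circles so the pullback clutching class is $[\phi]+[\phi]=0$ — so there is nothing to add.
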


Let $MCG(X)$ be the mapping class group of $X$, i.e. it is a group of diffeomorphisms of $X$ up to smooth isotopy. Let $MCG_0(X)$ denote the group of diffeomorphisms of $X$ s.t. each diffeomorphism is homotopic to identity.

\begin{Theorem}[\cite{LWXZ}]\label{1}
    There is a surjective homomorphism $\phi: MCG_*(\Sigma \times S^2) \rightarrow \mathbb{Z}[\zeta]^{\sigma}$, where the restriction on $MCG_0(\Sigma \times S^2)$ is also a surjective homomorphism.
\end{Theorem}

We will use the above proposition to prove Theorem \ref{1.1}. To show that, we now lift maps of $MCG_0(X)$ to $Diff(X^{(2)})$. Let $\varphi \in MCG_0(X)$ be a diffeomorphism of $X$ homotopic to the identity. Since $(\varphi \circ p)_* = \varphi_*\circ p_* \simeq id_*\circ p_*$, we have $ \varphi_*\circ p_*(X^{(2)}) \subset p_*(X^{(2)})$. Therefore, there exists a lift of $\varphi \circ p : X^{(2)} \rightarrow X$, we denote such a lift $\widetilde{\varphi}$. We therefore have the following diagram commutes.

\[
\begin{tikzcd}
X^{(2)} \arrow[r, dashed, "\widetilde{\varphi}"] \arrow[d, "p"'] & X^{(2)}\arrow[d, "p"] \\
X \arrow[r, "\varphi"'] & X
\end{tikzcd}
\]

We claim that $\widetilde{\varphi}: X^{(2)} \rightarrow X^{(2)}$ is a diffeomorphism. Notice that since \( p \) and \( \varphi \) are smooth, and \( \widetilde{\varphi} \) satisfies \( p \circ \widetilde{\varphi} = \varphi \circ p \), it follows that \( \widetilde{\varphi} \) is smooth as well. To show that \( \widetilde{\varphi} \) is a diffeomorphism, we construct its inverse map as follow. Since \( \varphi \) is a diffeomorphism, \( \varphi^{-1}: X \to X \) is smooth as its inverse map. Then the composition \( \varphi^{-1} \circ p: X^{(2)} \to X \) is a smooth map from \( X^{(2)} \) to \( X \). Because \( p \) is a covering map, the composition \( \varphi^{-1} \circ p \) lifts uniquely to a smooth map \( \widetilde{\varphi}^{-1}: X^{(2)} \to X^{(2)} \) satisfying
\[
p \circ \widetilde{\varphi}^{-1} = \varphi^{-1} \circ p.
\]

Now we verify that \( \widetilde{\varphi}^{-1} \) is indeed the inverse of \( \widetilde{\varphi} \). For all \( \widetilde{x} \in X^{(2)} \), we compute:
\[
p \circ (\widetilde{\varphi}^{-1} \circ \widetilde{\varphi})(\widetilde{x}) = \varphi^{-1} \circ p \circ \widetilde{\varphi}(\widetilde{x}) = \varphi^{-1} \circ \varphi \circ p(\widetilde{x}) = p(\widetilde{x}).
\]
Thus, \( \widetilde{\varphi}^{-1} \circ \widetilde{\varphi} \) and \( \mathrm{id}_{\widetilde{X}} \) are both lifts of the map \( p \) and agree at each point. We conclude that
\[
\widetilde{\varphi}^{-1} \circ \widetilde{\varphi} = \mathrm{id}_{{X^{(2)}}}.
\]
A symmetric argument shows that \( \widetilde{\varphi} \circ \widetilde{\varphi}^{-1} = \mathrm{id}_{X^{(2)}} \), hence \( \widetilde{\varphi} \) is a diffeomorphism between two $X^{(2)}$. 
Using analogous argument above, it is easy to see that $\widetilde{\varphi}' \in MCG_0(X'^{(2)})$ as a lift of $p'\circ \phi'$ and the following diagram commutes:

\[
\begin{tikzcd}
X'^{(2)} \arrow[r, dashed, "\widetilde{\varphi}'"] \arrow[d, "p'"] & X'^{(2)}\arrow[d, "p'"] \\
X' \arrow[r, "\varphi'"] & X'
\end{tikzcd}
\]

\begin{Theorem}\label{two-lift}
    For each $\varphi$ $\in MCG_0(X)$, there are exactly two lifts of $\varphi$ between $X^{(2)}$, denoted $\varphi_1$, $\varphi_2$ s.t. $\varphi_1 = \gamma \varphi_2$ where $\gamma$ corresponds to deck transformation of $p$.
\end{Theorem}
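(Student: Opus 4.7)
The plan is to apply the standard lifting theory for covering maps to the composition $\varphi \circ p : X^{(2)} \to X$, using the fact already established above that $\varphi \circ p$ does lift. The argument has two parts: counting lifts and identifying how they differ.

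First I would fix a basepoint $\tilde{x}_0 \in X^{(2)}$ with $p(\tilde{x}_0) = x_0$, and recall that a continuous map $f : X^{(2)} \to X$ lifts to a map $\tilde{f} : X^{(2)} \to X^{(2)}$ with prescribed value $\tilde{f}(\tilde{x}_0) = \tilde{y}$ if and only if $f_{*}\pi_1(X^{(2)}, \tilde{x}_0) \subseteq p_{*}\pi_1(X^{(2)}, \tilde{y})$, and such a lift is then unique. Applied to $f = \varphi \circ p$, existence of a lift was already verified above since $\varphi \simeq \mathrm{id}$ forces $(\varphi\circ p)_{*} = p_{*}$, so in particular $(\varphi\circ p)_{*}\pi_1(X^{(2)}) = p_{*}\pi_1(X^{(2)})$. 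Since the cover is regular (being of degree $2$), the subgroup $p_{*}\pi_1(X^{(2)}, \tilde{y})$ is the same for every point $\tilde{y}$ in the fiber $p^{-1}(\varphi(x_0))$, so a lift exists for each of the two possible choices of $\tilde{f}(\tilde{x}_0) \in p^{-1}(\varphi(x_0))$. This produces at least two lifts $\varphi_1, \varphi_2$ and, by the uniqueness clause, exactly two.

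Next I would identify the two lifts via the deck group. Let $\gamma$ be the nontrivial deck transformation of $p$; then $\gamma$ satisfies $p \circ \gamma = p$ and swaps the two sheets. Given any lift $\varphi_2$, the composition $\gamma \circ \varphi_2$ is also smooth and satisfies
\[
p \circ (\gamma \circ \varphi_2) = (p \circ \gamma) \circ \varphi_2 = p \circ \varphi_2 = \varphi \circ p,
\]
so $\gamma \circ \varphi_2$ is a lift of $\varphi \circ p$. Since $\gamma$ acts freely on the fiber, $(\gamma \circ \varphi_2)(\tilde{x}_0) \neq \varphi_2(\tilde{x}_0)$, so by the uniqueness part of the lifting theorem this second lift is distinct from $\varphi_2$, and must therefore equal $\varphi_1$. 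This gives the relation $\varphi_1 = \gamma \circ \varphi_2$ claimed in the statement.

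The only mild subtlety, and the step I would write most carefully, is that the lifting criterion requires the target covering to be based, i.e.\ one must specify which preimage of $\varphi(x_0)$ the basepoint goes to. The argument using $\varphi \simeq \mathrm{id}$ and regularity of the degree $2$ cover shows that either choice works, and this is precisely what makes the count equal to $2$ rather than $1$. An entirely analogous argument yields the same statement for $X'$ and its orientable double cover, which I would note in a closing remark rather than repeat verbatim.
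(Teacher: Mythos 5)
Your proof is correct and follows essentially the same route as the paper: standard covering space lifting theory, with the observation that $\gamma\circ\varphi_2$ is again a lift (verified by the same computation $p\circ\gamma\circ\varphi_2 = p\circ\varphi_2 = \varphi\circ p$) and uniqueness of lifts forcing exactly two. Your version is slightly more explicit about why \emph{at least} two lifts exist (via the lifting criterion and regularity of the degree-$2$ cover), whereas the paper gets this more directly from exhibiting $\gamma\circ\widetilde{\varphi}$ as a second lift, but the substance is the same.
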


\begin{proof}
Suppose \( \widetilde{\varphi}_1 \) and \( \widetilde{\varphi}_2 \) are two lifts of \( \phi \circ p \), i.e., they both satisfy
\[
p \circ \widetilde{\varphi}_1 = \varphi \circ p = p \circ \widetilde{\varphi}_2.
\]
Then for all \( a \in X^{(2)} \), we have \( p(\widetilde{\varphi}_1(a)) = p(\widetilde{\varphi}_2(a)) \), so both \( \widetilde{\varphi}_1(a) \) and \( \widetilde{\varphi}_2(a) \) lie in the same fiber of \( p \). Since \( p \) is a 2-sheeted covering, each fiber contains exactly two points. Thus, for each \( a \in X^{(2)} \), either
\[
\widetilde{\varphi}_2(a) = \widetilde{\varphi}_1(a) \quad \text{or} \quad \widetilde{\varphi}_2(a) = \gamma(\widetilde{\varphi}_1(a)),
\]
where \( \gamma \) is the nontrivial deck transformation of \( p \). By the uniqueness of lifting, we either have $\widetilde{\varphi}_1 =\widetilde{\varphi}_2$ or  \( \widetilde{\varphi}_2 = \gamma \circ \widetilde{\varphi}_1 \).

Additionally we verify that \( \gamma \circ \widetilde{\varphi} \) is also a lift:
\[
p \circ (\gamma \circ \widetilde{\varphi}) = p \circ \widetilde{\varphi} = \phi \circ p,
\]
Therefore, both \( \widetilde{\varphi} \) and \( \gamma \circ \widetilde{\varphi} \) are lifts, and these are the only two.
\end{proof}

Recall that $\zeta = (\pi_1(M,b)\setminus \{1\})/\text{conjugaton}.$ Reccall that $\Dax: \mathcal{E}_d\times \mathcal{E}_d\to \mathbb{Z}[\zeta]$, denote $D(\phi)= Dax(f_0, \phi \circ f_0)$.\ Consider the following composition map $$MCG_0(X) \xrightarrow{lifting} MCG_0(X^{(2)}) \xrightarrow{D} \mathbb{Z}[\zeta]$$. 

Take $[\varphi]\in MCG_0(X)$. By definition there exists a homotopy
\[
H:X\times I \longrightarrow X, \qquad H_0=\varphi,\quad H_1=\id_X.
\]
Consider the composition
\[
H\circ(p\times \id_I): X^{(2)}\times I \longrightarrow X.
\]
By the homotopy lifting property, there exists a unique lift
\[
\widetilde H: X^{(2)}\times I \longrightarrow X^{(2)}
\]
with terminal condition $\widetilde H_1=\id_{X^{(2)}}$ such that
\[
p\circ \widetilde H = H\circ(p\times \id_I).
\]
Define $\widetilde\varphi = \widetilde H_0$. Then $p\circ \widetilde\varphi
= H_0\circ p = \varphi\circ p$, so $\widetilde\varphi$ is a lift of $\varphi$.
Moreover, the homotopy $\widetilde H$ shows that $\widetilde\varphi\simeq
\id_{X^{(2)}}$. Above shows that $\widetilde{\varphi}$ is a diffeomorphism between $X^{(2)}$. Hence $[\widetilde\varphi]\in MCG_0(X^{(2)})$.

Suppose $\widehat\varphi$ is another lift of $\varphi$. By above theorem,
$\widehat\varphi = \gamma\circ \widetilde\varphi$ for $\gamma$ the degree 2
nontrivial deck transformation. If $\widehat\varphi\simeq \id_{X^{(2)}}$, then
\[
\gamma = \widehat\varphi\circ \widetilde\varphi^{-1} \simeq \id_{X^{(2)}},
\]
contradicting the fact that the nontrivial deck transformation is not
homotopic to the identity. Thus exactly one lift of $\varphi$ lies in
$MCG_0(X^{(2)})$.

Finally, the class $[\widetilde\varphi]$ does not depend on the choice of
homotopy $H$: if $H$ and $H'$ are two homotopies from $\varphi$ to $\id_X$,
their lifts $\widetilde H, \widetilde H'$ both satisfy the same terminal
condition $\id_{X^{(2)}}$, so by uniqueness of homotopy lifting we have
$\widetilde H_0 = \widetilde H'_0$. Thus $L$ is well defined.

\begin{remark}
    $L: MCG_0(X) \rightarrow MCG_0(X^{(2)})$ is not a surjective map. Due to the commutativity, $Im(L)$ contains only diffeomorphisms of $X^{(2)}$ preserving fibers under $p$.
\end{remark}

\begin{Theorem}\label{Main0_1}
    There exists surjective homomorphism $\Psi_0: MCG_0(X) \rightarrow \mathbb{Z}^{\infty} $ from the composition map $D\circ L: MCG_0(X) \rightarrow \mathbb{Z}[\zeta]^{\sigma}$.
\end{Theorem}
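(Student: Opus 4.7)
The plan is to realize $\Psi = D \circ L$ as a composition of two homomorphisms whose image contains a subgroup of countably infinite rank inside $\mathbb{Z}[\zeta]^{\sigma}$, then project onto a basis to get the surjection onto $\mathbb{Z}^{\infty}$.

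First I would verify that $L: MCG_0(X) \to MCG_0(X^{(2)})$ is a group homomorphism. For $[\varphi],[\psi] \in MCG_0(X)$, both $L([\varphi])\circ L([\psi])$ and $L([\varphi]\cdot[\psi])$ are smooth maps $X^{(2)} \to X^{(2)}$ lifting $(\varphi\circ\psi)\circ p$, and both are homotopic to $\id_{X^{(2)}}$ since a composition of identity-homotopic maps is identity-homotopic. The uniqueness statement proved immediately above (exactly one identity-homotopic lift exists) forces them to coincide, so $L$ is a homomorphism. The map $D$ is a homomorphism on $MCG_0(X^{(2)})\cong MCG_0(\Sigma^{(2)}\times S^2)$ by combining properties (2) and (3) of Theorem \ref{thm: relative Dax invariant}: for $\phi,\psi \in MCG_0(X^{(2)})$,
\[
D(\phi\psi) = \Dax(f_0, \phi\psi\circ f_0) = \Dax(f_0,\phi\circ f_0) + \Dax(\phi\circ f_0, \phi\circ\psi\circ f_0) = D(\phi) + D(\psi),
\]
where the last equality uses the diffeomorphism invariance (3) with $\phi \simeq \id$. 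Hence $\Psi = D\circ L$ is a homomorphism with image in $\mathbb{Z}[\zeta]^{\sigma}$.

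To exhibit infinite rank I would use the self-referential barbell construction from \cite{LWXZ}. For each suitable embedded arc $\gamma \subset X$ the associated barbell $\beta_\gamma$ is homotopic to the identity, so $[\beta_\gamma] \in MCG_0(X)$. The arc $\gamma$ has two disjoint lifts $\tilde\gamma_1,\ \tilde\gamma_2=\tau\cdot\tilde\gamma_1$ in $X^{(2)}$; after shrinking the barbell's support so that its preimage is a disjoint union of two sheets, $L(\beta_\gamma)$ decomposes as the commuting composition of two self-referential barbells on $X^{(2)}$, one near each lift. Applying additivity and property (4) of Theorem \ref{thm: relative Dax invariant} sheet by sheet yields
\[
\Psi([\beta_\gamma]) = (g_1 + g_1^{-1}) + (g_2 + g_2^{-1}) \in \mathbb{Z}[\zeta]^{\sigma},
\]
where $g_i \in \zeta$ is the conjugacy class of the loop built from $\tilde\gamma_i$. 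Choosing a sequence $\{\gamma_i\}_{i \in \mathbb{N}}$ so that the quadruples $\{g_{i,1}^{\pm 1}, g_{i,2}^{\pm 1}\}$ are pairwise disjoint inside $\zeta$ is possible because $\pi_1(\Sigma^{(2)})$ is a surface group of genus $2g-1$ with infinitely many conjugacy classes, while the combined orbits under inversion and $\tau_*$ have size at most four. The images $\Psi([\beta_{\gamma_i}])$ then have pairwise disjoint supports in the $\zeta$-basis and are $\mathbb{Z}$-linearly independent, generating a free abelian subgroup of countable rank in $\mathrm{Im}(\Psi)$. Projecting onto this basis realizes the required surjection $\Psi: MCG_0(X)\twoheadrightarrow \mathbb{Z}^{\infty}$, and the same argument applies verbatim to $X'$ via its orientable double cover, completing Theorem \ref{1.1}.

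The principal obstacle is the identity $\Psi([\beta_\gamma]) = (g_1+g_1^{-1})+(g_2+g_2^{-1})$ above. One must confirm that the lift of a single self-referential barbell genuinely decomposes as two disjointly supported barbells on the cover rather than producing extra interaction or correction terms, and that the additivity of $\Dax$ cleanly sums the two sheet contributions. This requires careful choice of the tubular neighborhood of $\gamma$, sheet-by-sheet lifting of the arc-pushing isotopy that defines the barbell, and an appeal to properties (1)-(2) of Theorem \ref{thm: relative Dax invariant} to isolate the two contributions. Once this structural formula is in hand, the linear-independence and the extraction of $\mathbb{Z}^{\infty}$ from $\mathbb{Z}[\zeta]^{\sigma}$ are elementary.
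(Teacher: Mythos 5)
Your proposal follows essentially the same route as the paper: both realize $\Psi = D\circ L$ as a composition of homomorphisms, lift a self-referential barbell to a pair of disjointly supported barbells on the double cover $X^{(2)}\cong\Sigma^{(2)}\times S^2$ contributing $(g_1+g_1^{-1})+(g_2+g_2^{-1})$ with $g_2$ the deck-transformation image of $g_1$, and extract infinitely many independent such elements from $\mathbb{Z}[\zeta]^{\sigma}$. Your linear-independence step (choosing arcs with pairwise disjoint quadruples $\{g_{i,1}^{\pm1},g_{i,2}^{\pm1}\}$) is in fact stated more carefully than the paper's orbit-counting argument, and the structural claim you flag as the main obstacle is precisely the step the paper also asserts by citing the disjoint-support commutativity of barbells from the references.
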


\begin{proof}
    We show that $Im(D\circ L) = \mathbb{Z}$. Since $Dax$ is a homomorphism, $D\circ L$ is also a homomorphism. We use the idea of self-referential disk.

    $\forall \beta$ as a self-referential barbell diffeomorphism, $\beta$ is isotopic to identity\cite{BG21}. Hence $\beta \in MCG_0(X)$. We then lift $\beta$ to $MCG_0(M)$,  the lifted $\widetilde{\beta}$ is a combination of two self-referential barbell diffeomorphisms with associated loop $\pm([c_1]+[c_1]^{-1}) \pm([c_1']+[c_1']^{-1})$ where $[c_1]= \gamma[c_1']$. Since we can isotope two self-referential barbell diffeomorphisms to have disjoint support, any two such diffeomorphisms are commutative\cite{LWXZ}. Since $\pi_1(X) \simeq \pi_1(X^{2})$, we have $$D\circ L(\beta) = D(\widetilde{\beta}) = Dax(f_0,\widetilde{\beta}f_0) = \pm([c_1]+[c_1]^{-1}) \pm([c_1']+[c_1']^{-1})$$. Therefore, $Im(D\circ L) = \{ \pm([c_1]+[c_1]^{-1}) \pm([c_1']+[c_1']^{-1}) \}_{[c] \in \pi_1(X)}$

    We then show that $Im(D\circ L)$ is a subgroup of $\mathbb{Z}^{\infty}$. $\forall [c]\in \pi_1(M)$, consider $\gamma(\pm([c]+[c]^{-1}))= \pm([c']^{-1}+[c'])$. Since $\gamma$ has degree 2, it is equivalent to consider a group action of $ \mathbb{Z}_2 $ on $\{\pm([c]+[c]^{-1}) \}= \mathbb{Z}[\zeta]^{\sigma} $ and each orbit consists of at most two homotopy classes: $\pm([c]+[c]^{-1})$ and $\pm([c']+[c']^{-1})$ up to conjugacy. Since $\mathbb{Z}[\zeta]^{\sigma}$ is an infinite group, there are infinite orbits as well. Therefore, each orbit gives different sums, hence we have infinite many independent elements $\pm([c]+[c]^{-1})$ $\pm([c']+[c']^{-1})$, which can be represented by $\mathbb{Z}^{\infty}$.

    There exists a surjective homomorphism from the set of self-referential barbell diffeomorphisms of $X$ to $Z^{\infty}$. Therefore such $\Psi$ exists.


\end{proof}

By analogous proof, we have:

\begin{Corollary}
    There exists surjective homomorphism $\Psi_1: MCG_0(X') \rightarrow \mathbb{Z}^{\infty} $. 
\end{Corollary}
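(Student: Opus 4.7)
The plan is to mirror the proof of the preceding theorem almost verbatim, with the key input being the already–established isomorphism $X'^{(2)}\cong \Sigma'^{(2)}\times S^2$. Since $\Sigma'^{(2)}$ is the \emph{orientable} double cover of the nonorientable surface $\Sigma'$, it is a closed orientable surface of finite genus, so $X'^{(2)}$ lies exactly in the setting of \cite{LWXZ}. In particular, Theorem \ref{1} supplies a surjective homomorphism
\[
D:MCG_0(X'^{(2)})\longrightarrow \mathbb{Z}[\zeta']^{\sigma},\qquad D(\widetilde{\varphi})=\Dax(f_0,\widetilde{\varphi}\circ f_0),
\]
where $\zeta'=(\pi_1(X'^{(2)})\setminus\{1\})/\mathrm{conj}$.

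Next, I would construct a lifting homomorphism $L':MCG_0(X')\to MCG_0(X'^{(2)})$. Given $[\varphi']\in MCG_0(X')$ and a homotopy $H':X'\times I\to X'$ from $\varphi'$ to $\id_{X'}$, apply the homotopy lifting property to $H'\circ(p'\times\id_I):X'^{(2)}\times I\to X'$ with terminal condition $\widetilde H'_1=\id_{X'^{(2)}}$; set $L'([\varphi'])=[\widetilde H'_0]$. The arguments used for $X$ carry over verbatim to show that $\widetilde H'_0$ is a diffeomorphism, is homotopic to $\id_{X'^{(2)}}$, that exactly one of the two set-theoretic lifts lies in $MCG_0$, and that $L'$ is well defined and a homomorphism. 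As in the remark preceding the previous theorem, the image of $L'$ consists precisely of fiber-preserving elements of $MCG_0(X'^{(2)})$.

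To finish, I would form the composition $\Psi':=D\circ L':MCG_0(X')\to \mathbb{Z}[\zeta']^{\sigma}$ and show it is surjective onto a subgroup isomorphic to $\mathbb{Z}^{\infty}$. For each embedded arc in $X'$, Gabai's barbell construction produces a self-referential barbell diffeomorphism $\beta\in MCG_0(X')$ that is homotopic to the identity \cite{BG21}. Its lift $\widetilde\beta=L'(\beta)$ is a product of two self-referential barbells on $X'^{(2)}$, supported on the two sheets of the cover, whose associated loops are related by the nontrivial deck transformation $\gamma'$. Thus
\[
\Psi'(\beta) \;=\; \pm\bigl([c]+[c]^{-1}\bigr)\;\pm\;\bigl([\gamma'c]+[\gamma'c]^{-1}\bigr)\;\in\;\mathbb{Z}[\zeta']^{\sigma}.
\]
Viewing $\gamma'$ as an order-$2$ action on $\mathbb{Z}[\zeta']^{\sigma}$, each $\gamma'$-orbit of norm pairs contributes an independent generator, and since $\mathbb{Z}[\zeta']^{\sigma}$ is infinitely generated, infinitely many such orbits exist. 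This produces a surjection onto $\mathbb{Z}^{\infty}$.

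The step I expect to require the most care is the last one: checking that the $\gamma'$-orbit sums $\pm([c]+[c]^{-1})\pm([\gamma'c]+[\gamma'c]^{-1})$ span an infinite-rank subgroup of $\mathbb{Z}[\zeta']^{\sigma}$. This reduces to verifying that the fixed-point set of $\gamma'$ acting on $\zeta'/\{g\sim g^{-1}\}$ leaves infinitely many free orbits, which follows because the deck transformation of the orientable double cover acts with only finitely many fixed conjugacy classes compared to the infinite conjugacy classes in $\pi_1(\Sigma'^{(2)}\times S^2)\cong \pi_1(\Sigma'^{(2)})$. Modulo this combinatorial verification, the rest of the proof is a direct transcription of the argument used to establish the previous theorem.
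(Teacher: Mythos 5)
Your proposal takes exactly the approach the paper intends: the paper's entire justification for this corollary is the phrase ``by analogous proof,'' referring to the preceding theorem, and your argument is precisely that transcription --- pull back to $X'^{(2)}\cong\Sigma'^{(2)}\times S^2$, build the lifting homomorphism $L'$, compose with the Dax homomorphism, and detect infinite rank via self-referential barbells and their deck-transformation orbits. In fact you supply more detail than the paper does (notably the orbit-counting caveat at the end), so the proposal is correct and, if anything, more complete than the source.
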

Furthermore, we can extend the map 
\[
\Psi : MCG_0(X) \longrightarrow \mathbb{Z}^{\infty}
\]
to the subgroup of $MCG(X)$:
\[
LMCG(X)=\{\, f\in MCG(X)\mid 
f_{*}\big(p_{*}(\pi_{1}(X^{2}))\big)=p_{*}(\pi_{1}(X^{2})) \,\}.
\]
Since $p\colon X^{2}\to X$ is a double covering, the subgroup
$p_{*}(\pi_{1}(X^{2}))<\pi_{1}(X)$ has index~$2$ and is therefore normal.  
For a mapping class $f\in MCG(X)$, a lift of $f\circ p$ exists if and only if
\[
f_{*}\big(p_{*}(\pi_{1}(X^{2}))\big)=p_{*}(\pi_{1}(X^{2})),
\]
so $LMCG(X)$ is precisely the subgroup of mapping classes that admit lifts in double cover.

Since $\pi_{1}(X^{2})\cong \pi_{1}(\Sigma^{2})$, the only situation in which every mapping class of $X$ lifts to $X^{2}$ is when the fiber~$F$
has genus~$0$.  In higher genus, there exist mapping classes that do not
preserve $p_{*}(\pi_{1}(X^{2}))$, so we restrict to $LMCG(X)$. Remark that
\[
MCG_0(X)\subset LMCG(X),
\]. if $f\in MCG_0(X)$, then $f$ is homotopic to the identity.  
Consequently, $f_{*}$ acts on $\pi_{1}(X)$ by an inner automorphism.  
Because index-$2$ subgroups are normal, every inner automorphism preserves 
$p_{*}(\pi_{1}(X^{2}))$.  


Since the fiber in the covering map may not be preserved under homotopy, $\Phi$ can not be extended to all $MCG(X^2)$. To have such lifting map, one needs to consider cover space of $X$ with higher sheets. 

\begin{Theorem}
    There exists an even integer $2k$ and a covering space $X^{2k}$ of $X$ s.t. $(f\circ p_{2k})_*(\pi_1(X^{2k})) = p_{2k}*(\pi_1(X^{2k}))$ for any $f \in MCG(X)$, where $p_{2k}: X^{2k} \rightarrow X$ is the covering map.
\end{Theorem}

\begin{proof}
Take $2m$ to be an even integer for arbitrary positive $m$, we define the set $S = \{H_i \ | \ [\pi_1(X) : H_i] = 2m\}$. Let $L = \cap_{H_i \in S} H_i$. Hence $f_*(L) = \cap_{H_i \in S} f_*(H_i) = \cap_{H_j \in S} H_j = L$ since $f \in MCG(X)$ is bijective. 

It is easy to see that $L$ is nontrivial since $[\pi_1(X) : L] = [\pi_1(X): H_i] [ H_i: L] = 2m \times n$ for some $n$. Since $\pi_1(X)$ is an infinite group for $g \neq 0$, $L$ is in fact a nontrivial subgroup of even index, denote $k=mn$. Take $X^{2k}$ to be the corresponding cover space of $L$, the proof is therefore complete.
\end{proof}
Before the application of generalization of Dax invariants in $MCG(X^{2k})$, we provide a straightforward proof of the following lemma:
\begin{lemma}
The $2k$-fold cover of $X$
is diffeomorphic to $S^2 \times \Sigma_g$, for some $g$.
\end{lemma}

\begin{proof}
Since the corresponding subgroup of $H_{2k} < H_2$, there exists $q: X^{2k}\rightarrow X^2$. It is equivalent to show that any fiber preserving cover of $X^2$ as trivial bundle is also a trivial bundle. Since $p^{2k}=p^2\circ q$, the bundle $p^{2k}$ is the pullback of the trivial bundle
$p^2\colon S^2\times\Sigma_g\to\Sigma_g$ and hence is itself trivial.
\end{proof}

We define $\Psi = q \circ Dax \circ L_{2k}$ where $L_{2k}: MCG(X) \rightarrow MCG(X^{2k})$ is the lifting map.  
\[
\begin{tikzcd}[column sep=large, row sep=large]
MCG(X) \arrow[r,"L_{2k}"] \arrow[rrr,bend left=25,"\Psi'"'] &
MCG(X^{2k}) \arrow[r,"Dax"] &
\mathbb{Z}[\zeta] \arrow[r,"q"] &
\mathbb{Z}[A]
\end{tikzcd}
\]
Notice that $Im(L_{2k})$ consists of $2k$ lifted maps $\forall f \in MCG(X)$. However $Dax$ take each lifted map $\tilde{f}_i$ to the same image i.e. $Dax(L_{2k}(f)_i) = \sum_{i=1}^{2k} \phi_i(\pm([c] + [c]^{-1}))$ where $\phi_i$ denotes a deck transformation. Thus $\Psi'$ is well defined.

We can now extend the Theorem \ref{double-trivial}, Theorem \ref{two-lift} and Theorem \ref{Main0_1} to the $X^{2k}$ cover and obtain the following corollary:

\begin{Corollary}\label{02k}
There exists surjective homomorphism from $MCG_0(X)$ to $\mathbb{Z}^{\infty} $.
\end{Corollary}

\begin{proof}
Since every self-referential barbell diffeomorphism is in $MCG_0$, the image of all barbell diffeomorphisms under $Dax \circ L_{2k}$ is therefore the subgroup $B$ containing $\sum_{i=1}^{k} \phi_i(\pm([c] + [c]^{-1}))$ for all ${[c] \in \pi_1(X)}$. It is equivalent to consider a group action of $ \mathbb{Z}_{2k} $ on $\{\pm([c]+[c]^{-1}) \}= \mathbb{Z}[\zeta]^{\sigma} $ and each orbit consists of $2k$ homotopy classes: $\pm([c]+[c]^{-1})$ and image of $\pm([c]+[c]^{-1})$ under all deck transformations. up to conjugacy. The rest follows from Theorem \ref{Main0_1}. We denote this infinity surjective map $\Psi'_0$.
\end{proof}

\begin{Theorem}
    There exists surjective homomorphism from $MCG(X)$ to $\mathbb{Z}^{\infty} $.
\end{Theorem}

\begin{proof}
    $\Psi'$ is a homomorphism by Lemma 5.11 in \cite{LWXZ}. $\forall a\in A$, let $\gamma_a \in \zeta$ denote a lifting of $a$. By Theorem \ref{02k}, we have $$2a  = q (\pm ([\gamma_a] +[\gamma_a^{-1}])) = q \circ Dax(\sum_{i=1}^{2k} \phi_i(\pm([\gamma_a] + [\gamma_a]^{-1}))) \in Im\Psi'$$ and $\Psi'$ is therefore of infinite rank. We have obtained such homomophism since $Im(\Psi') \simeq \mathbb{Z}^{\infty}$ as a subgroup of the free abelian group $\mathbb{A}$.
\end{proof}

By analogous proof, we have the following corollary:

\begin{Corollary}
There exists surjective homomorphism from $MCG(X')$ to $\mathbb{Z}^{\infty} $.
\end{Corollary}



\newpage

\section*{Acknowledgement}
The author would like to express her deepest gratitude to Professor Boyu Zhang for his guidance, support, and encouragement throughout the preparation of this work.

\end{document}